\DeclarePairedDelimiter\cbrac\{\} 
\DeclarePairedDelimiter\abs{\lvert}{\rvert} 
\newcommand{\N}{\ensuremath{\mathbb{N}}} 
\newcommand{\Q}{\ensuremath{\mathbb{Q}}} 
\newcommand{\se}{\ensuremath{\subseteq}} 
\newcommand{\Ind}{\ensuremath{\mathbf{1}}} 
\renewcommand{\c}{\ensuremath{\colon}} 
\renewcommand{\P}{\ensuremath{\mathbf{P}}} 
\theoremstyle{plain}
\newtheorem{theorem}{Theorem}
\newtheorem{lemma}[theorem]{Lemma}
\newtheorem{proposition}[theorem]{Proposition}
\theoremstyle{remark}
\newtheorem{remark}{Remark}
\begin{document}
\title[Couplings and Matchings]{Couplings and Matchings \\ Combinatorial notes on Strassen's theorem}

\author[V.T.~Koperberg]{Twan Koperberg$^\star$} 
\address{{$^\star$ Leiden University, Mathematical Institute, Niels Bohrweg 1
		2333 CA, Leiden. The Netherlands.}}
\email{v.t.koperberg@math.leidenuniv.nl}

 
\begin{abstract}
Some mathematical theorems represent ideas that are discovered again and again in different forms. 
One such theorem is Hall's marriage theorem.
This theorem is equivalent to several other theorems in combinatorics and optimization theory, in the sense that these results can easily be derived from each other. In this paper it is shown that this equivalence extends to a finite version of Strassen's theorem, a celebrated result on couplings of probability measures. Though this equivalence is known, probabilistic or combinatorial proofs of this fact are lacking. A novel combinatorial lemma will be introduced that can be used to deduce both Hall's and Strassen's theorems.
\end{abstract}

\maketitle

\section{Introduction}
In the original paper from \citeyear{hall1935representatives} \citeauthor{hall1935representatives} already mentions a similarity between his \emph{marriage theorem} and a result by \citeauthor{konig1916graphen} from \citeyear{konig1916graphen}.
Since then numerous other results have been found that are `equivalent' to Hall's theorem. This equivalence is an informal concept and simply means that two results can be derived from each other via simple proofs.
This class of equivalent theorems includes among others 
\emph{Menger's theorem} \citeyearpar{menger1927kurventheorie}, \emph{K\"{o}nig's minimax theorem} \citeyearpar{konig1931graphen}, the \emph{Birkhoff-von Neumann theorem} \citeyearpar{birkhoff1946algebra}, 
\emph{Dilworth's theorem} \citeyearpar{dilworth1950decomposition} and the \emph{max-flow min-cut theorem} by \citeauthor{ford1956flow} \citeyearpar{ford1956flow}. An extensive discussion on these equivalences can be found in \cite{reichmeider1984equivalence}.

It is often overlooked that \emph{Strassen's theorem} \citeyearpar{strassen1965existence} also belongs to this class of equivalent statements. 
The equivalence of Strassen's theorem and Hall's theorem is already known in the literature, as it is mentioned in e.g. \cite{feldman1996doublystochastic}. However, explicit proofs that witness this equivalence are difficult to find.
As Strassen's theorem is a result from probability theory, the original proof made use of analytical tools rather than the combinatorial methods used in the proofs of the above mentioned theorems. Therefore, it is remarkable that this result is, in fact, equivalent to these combinatorial statements.

In this paper we consider a finite version of Strassen's theorem, which is stated in \cref{thm:strassen_finite}. For a discussion on the general version of the theorem the reader is referred to \cite{lindvall1999strassen}.

The goal of this paper is twofold: firstly to give a combinatorial proof of the finite version of Strassen's theorem directly from first principles,
and secondly to give a simple proof of the equivalence between Strassen's theorem and Hall's theorem.
For both of these objectives we will make use of a novel lemma, that will be introduced in \cref{sec:subforest_lemma}, and which will be referred to as the \emph{subforest lemma}. 
As will be discussed in \cref{rem:transport}, this lemma could be derived from a more abstract result within the theory of optimal transport.
In \cref{sec:main_theorems} we will introduce the two main theorems. 
The original part of this paper is contained in the subsequent sections, whose content is outlined in \cref{fig:outline}.

\begin{figure}[h!t]
  \centering
  \includegraphics[scale=0.9]{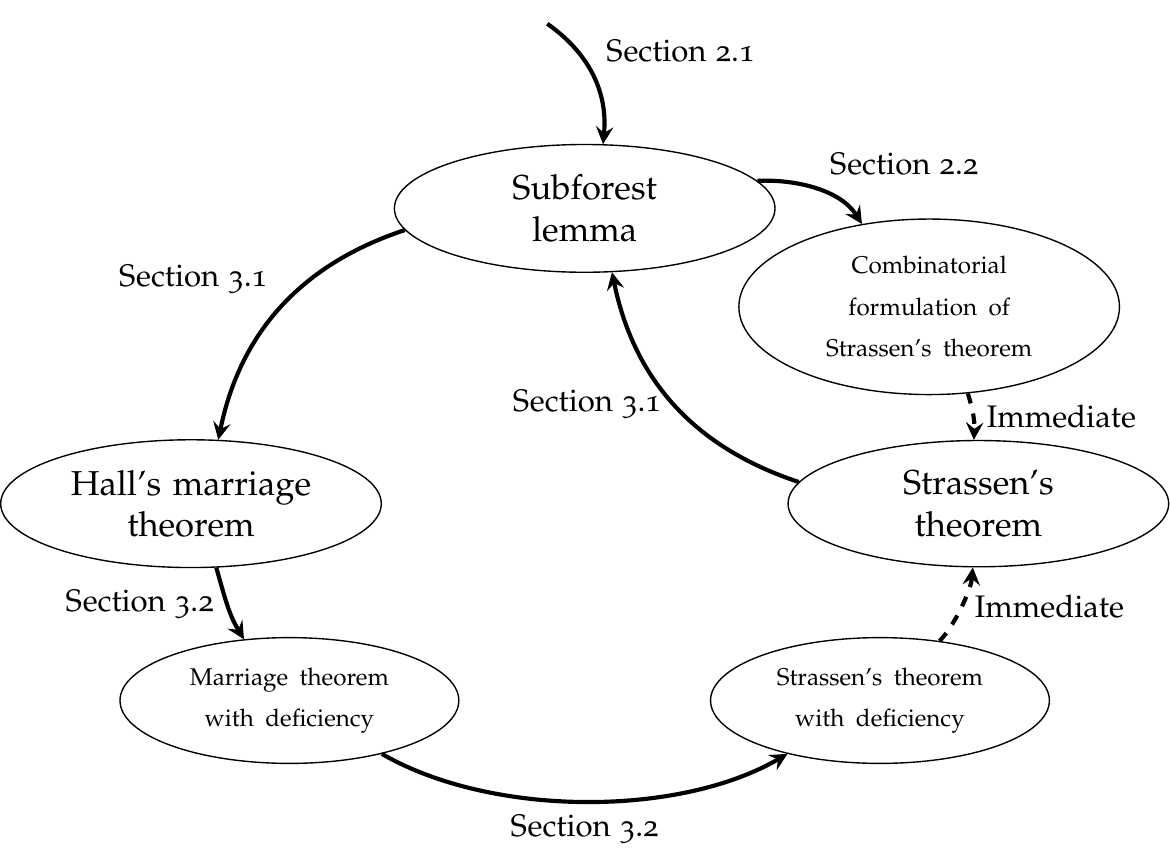}
  \caption{A graphical outline of this paper, where the arrows represent the different proofs.}
  \label{fig:outline}
\end{figure}

\subsection{The two main theorems}\label{sec:main_theorems}
We will start by introducing the two theorems that are the main topic of this paper.

If $\P$ and $\P'$ are probability measures on two finite sets $A$ and $B$, respectively, then a \emph{coupling} of $\P$ and $\P'$ is any probability measure $\hat{\P}$ on the product set $A \times B$ for which its marginals correspond to $\P$ and $\P'$.
That is, for all $U \se A$ and $S \se B$ it holds that $\P(U)=\hat{\P}(U\times B)$ and $\P'(S)=\hat{\P}(A\times S)$.

\begin{theorem}[Strassen's theorem for finite sets]\label{thm:strassen_finite}
 Let $A$ and $B$ be finite sets and $R\se A \times B$ a relation between them.
 Let $\P$ and $\P'$ be probability measures on $A$ and $B$, respectively. 
 Then there exists a coupling $\hat{\P}$ of $\P$ and $\P'$ with
 $\hat{\P}(R)=1$ if and only if
 \begin{equation}\label{eq:coupling_condition}
  \P(U) \leq \P'(N_R(U)), \quad \text{ for all }U \se A,
 \end{equation}
 where $N_R(U)=\cbrac{y \in B\c \exists x \in U\text{ s.t. }(x,y) \in R}$.
\end{theorem}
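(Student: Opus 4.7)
The plan is to treat the two implications separately. The forward (necessity) direction is routine: if $\hat{\P}$ is a coupling of $\P$ and $\P'$ with $\hat{\P}(R)=1$, then for any $U\se A$ any pair $(x,y)\in R$ with $x\in U$ satisfies $y\in N_R(U)$, so using the marginals of $\hat{\P}$ one obtains
\[
  \P(U) \;=\; \hat{\P}(U\times B) \;=\; \hat{\P}\bigl((U\times B)\cap R\bigr) \;\leq\; \hat{\P}\bigl(A\times N_R(U)\bigr) \;=\; \P'(N_R(U)).
\]

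For the backward (sufficiency) direction my plan is to reduce the problem to Hall's marriage theorem via a blow-up construction. First handle the case where $\P$ and $\P'$ are rational-valued with a common denominator $N$. Build a bipartite graph $G_N$ whose left part consists of $N\P(x)$ copies of each $x\in A$, whose right part consists of $N\P'(y)$ copies of each $y\in B$, and in which a copy of $x$ is adjacent to a copy of $y$ precisely when $(x,y)\in R$. Both parts have cardinality $N$, and for any subset $U'$ of the left part with underlying projection $U\se A$ the estimate $\abs{U'}\leq N\P(U)\leq N\P'(N_R(U))=\abs{N_{G_N}(U')}$ is precisely Hall's condition for $G_N$. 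Hall's theorem therefore yields a perfect matching $M$, and the empirical measure
\[
  \hat{\P}(x,y) \;=\; \frac{1}{N}\,\abs{\{(x_i,y_j)\in M \c x_i\text{ is a copy of }x,\ y_j\text{ is a copy of }y\}}
\]
is a coupling supported on $R$ with marginals $\P$ and $\P'$.

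The main obstacle is removing the rationality assumption: a naive coordinate-wise rational approximation of $(\P,\P')$ need not preserve \eqref{eq:coupling_condition} whenever some $U\se A$ is tight. One workaround is to enlarge $B$ by an auxiliary element $\ast$ that absorbs mass $\varepsilon$ of $\P'$ and is connected to every $x\in A$, so that the modified condition holds with a strict margin, approximate by rationals, apply the rational case, and pass to the limit $\varepsilon\to 0$ inside the compact simplex of couplings on $A\times B$ with marginals $(\P,\P')$ supported on $R$. An attractive alternative, which the subforest lemma promised in \cref{sec:subforest_lemma} is presumably designed to enable, is a direct induction on the number of atoms of $\P$ and $\P'$ that avoids rationalisation entirely: either some nonempty proper $U\subsetneq A$ is tight in the sense that $\P(U)=\P'(N_R(U))$, in which case the problem splits into two smaller instances on $(U,N_R(U))$ and $(A\setminus U,B\setminus N_R(U))$ each still satisfying \eqref{eq:coupling_condition}, or no such tight set exists and one can place mass $\delta=\min\bigl(\P(x),\P'(y)\bigr)$ on a suitable edge $(x,y)\in R$ to annihilate the atom at $x$ or at $y$ while keeping \eqref{eq:coupling_condition} intact, thereby reducing to a strictly smaller instance.
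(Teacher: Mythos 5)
Your main line of argument is correct, but it is not the route the paper takes as its primary proof: the paper's ``independent'' proof of \cref{thm:strassen_finite} goes through the subforest lemma (\cref{thm:subforest}) and an induction that peels off a degree-one vertex of the subforest, whereas your proof is essentially the paper's \emph{secondary} derivation of Strassen from Hall (\cref{thm:strassen_deficiency} via \cref{thm:hall_deficiency}). The correspondence is close: your blow-up into $N\P(x)$ copies is exactly the paper's construction of $\tilde{G}$, and your auxiliary absorbing element $\ast$ of mass $\varepsilon$ is just a repackaging of the paper's additive deficiency $\P(U)\leq\P'(N_R(U))+\varepsilon$ (a coupling of the enlarged problem restricted to $A\times B$ is precisely a coupling with $\hat{\P}(R)\geq 1-\varepsilon$). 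You correctly identify the one genuine subtlety --- tight sets make naive rational approximation fail --- and your fix is sound. One imprecision: the approximating couplings do not live ``inside the compact simplex of couplings on $A\times B$ with marginals $(\P,\P')$ supported on $R$'', since their marginals are only approximately $(\P,\P')$ and they put mass on $A\times\cbrac{\ast}$; you must take the limit in the compact cube $[0,1]^{A\times(B\cup\cbrac{\ast})}$ and then \emph{verify} that the limit has the right marginals and vanishing mass at $\ast$, which is exactly the $\delta$-chase the paper carries out at the end of its proof of \cref{thm:strassen_deficiency}. What the paper's subforest route buys over yours is a fully constructive, approximation-free argument; what yours buys is brevity given Hall's theorem.

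A caution about your sketched ``attractive alternative'': in the no-tight-set case, placing mass $\delta=\min\bigl(\P(x),\P'(y)\bigr)$ on an edge $(x,y)$ and subtracting it from both endpoints can destroy condition \eqref{eq:coupling_condition}. The danger comes from sets $U$ with $x\notin U$ but $y\in N_R(U)$: for such $U$ the right-hand side drops by $\delta$ while the left-hand side does not, so one needs $\delta$ to be at most the minimal slack $\P'(N_R(U))-\P(U)$ over precisely those sets. This is exactly why the proof of \cref{thm:subforest} defines $\delta$ as that minimum over the family $\mathcal{U}$ rather than as $\min\bigl(\P(x),\P'(y)\bigr)$. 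As stated, that branch of your alternative would fail; since your main argument does not depend on it, the proof as a whole stands.
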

We will refer to \eqref{eq:coupling_condition} as the \emph{coupling condition}. In \cite{feldman1996doublystochastic} it is shown how the general version of Strassen's theorem can be derived from this finite version.

In this paper we will use the graph theoretic formulation of the marriage theorem. All graphs in this paper are assumed to be simple finite undirected graphs. A \emph{bipartite graph} is a graph of which the vertices can be partitioned into two sets $\cbrac{A,B}$ such that all edges have one endpoint in $A$ and the other endpoint in $B$. This partition $\cbrac{A,B}$ will be called the \emph{bipartition} of the graph. 
A \emph{matching} of a graph is a subset $M$ of its edges such that all vertices are incident to at most one edge in $M$. If all vertices are incident to an edge in $M$, then $M$ is called a \emph{perfect matching}.
\begin{theorem}[Hall's marriage theorem]\label{thm:marriage_theorem}
Let $G$ be a bipartite graph with bipartition $\cbrac{A,B}$ such that $\abs{A}=\abs{B}$.
Then $G$ contains a perfect matching if and only if  it holds that
\begin{equation}\label{eq:marriage_condition}
 \abs{U} \leq \abs{N_G(U)}, \quad \text{ for all }U \se A.
\end{equation}
\end{theorem}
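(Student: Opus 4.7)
Given that \cref{thm:strassen_finite} is already stated, the natural strategy is to derive \cref{thm:marriage_theorem} from it, and this is what I would do. Set $n = \abs{A} = \abs{B}$. The forward direction is immediate: given a perfect matching $M$, the map sending each $a \in A$ to its $M$-partner in $B$ is an injection $U \to N_G(U)$ for every $U \se A$, so $\abs{U} \leq \abs{N_G(U)}$.

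For the converse, I assume Hall's condition. Let $\P$ and $\P'$ be the uniform probability measures on $A$ and $B$, each assigning mass $1/n$ to every element, and let $R \se A \times B$ be the edge relation of $G$, so that $N_R(U) = N_G(U)$. Then Hall's condition \eqref{eq:marriage_condition} divided through by $n$ is precisely the coupling condition \eqref{eq:coupling_condition}, and \cref{thm:strassen_finite} produces a coupling $\hat{\P}$ of $\P$ and $\P'$ with $\hat{\P}(R) = 1$.

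The main obstacle is to convert the (possibly fractional) coupling $\hat{\P}$ into an integral perfect matching inside $G$. My plan is a two-step reduction on the bipartite support graph $H \se G$ with edge set $\cbrac{(a,b) \c \hat{\P}(\cbrac{(a,b)}) > 0}$. First, while $H$ contains a cycle (necessarily of even length), I shift mass alternately by $+\epsilon$ and $-\epsilon$ around the cycle, where $\epsilon$ is the smallest mass on a ``decreasing'' edge; this preserves both marginals while strictly reducing the support size. After finitely many iterations $H$ becomes a forest. A leaf argument then closes the proof: any leaf $v \in A$ of $H$ with unique neighbor $w \in B$ satisfies $\hat{\P}(\cbrac{(v,w)}) = 1/n$ by the marginal at $v$, and then the marginal at $w$ forces every other edge incident to $w$ to carry mass zero, so $w$ is also a leaf and $\cbrac{v,w}$ is an isolated edge. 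Peeling off such edges shows that $H$ is itself a perfect matching.
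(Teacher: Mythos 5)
Your proof is correct. It follows the same broad route as the paper (which also derives Hall from Strassen), and your cycle-cancellation step is essentially identical to the mechanism the paper uses: the paper first proves its subforest lemma from Strassen's theorem by exactly this alternating $\pm\varepsilon$ shift around an even cycle of the support, reducing the support until it is a forest. Where you genuinely diverge is in the endgame. The paper, having only the subforest lemma in hand (a forest satisfying the marriage condition, for general weights), still needs an induction on $\abs{E}$: it picks a leaf $x$ with neighbor $y$, deletes both, and recurses to build the matching. You instead exploit the fact that the weights are uniform: a leaf $v\in A$ forces its unique edge to carry the full mass $1/n$, which saturates the marginal at its neighbor $w$ and makes $w$ a leaf too, so every component of the forest is a single edge and the support \emph{is} the perfect matching --- no induction needed. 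This is a clean shortcut available precisely because all vertex weights are equal; it would not work for the general weighted statement (\cref{thm:strassen_finite_combinatorial} or the subforest lemma), which is why the paper's more roundabout argument is structured the way it is. One small point worth making explicit in a final write-up: every vertex has positive marginal, so every vertex has degree at least one in the support graph, which rules out isolated vertices and guarantees the $n$ component-edges cover all $2n$ vertices.
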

Here $N_G(U)$ denotes the set of vertices that are neighbors of vertices in $U$. If the underlying graph is clear, then the subscript will be dropped.
We will refer to \eqref{eq:marriage_condition} as the \emph{marriage condition}.

\section{Independent proof of Strassen's theorem}
\subsection{The subforest lemma}\label{sec:subforest_lemma}
A graph that does not contain any cycles is called a \emph{forest}. 
A \emph{weighted graph} is a graph that is equipped with a vertex weight function $w:V\to[0,\infty)$.
For such weight functions we write $w(U)=\sum_{x \in U}w(x)$ for $U \se V$. Unless otherwise specified, a subgraph of a weighted graph is equipped with the restriction of the weight function to the vertices of the subgraph. 
So, in particular a spanning subgraph has the same weight function as the underlying full graph.
For brevity we will call a spanning subgraph a \emph{subforest} when it is a forest.
\begin{lemma}[Subforest lemma]\label{thm:subforest}
Let $G=(V,E,w)$ be a weighted bipartite graph with bipartition $\cbrac{A,B}$ such that $w(B)=w(B)$.
If it holds that 
\begin{equation}
w(U) \leq w(N_G(U)), \quad \text{for all } U \se A, \label{eq:subforest_condition}
\end{equation}
then $G$ contains a subforest that satisfies \eqref{eq:subforest_condition}.
\end{lemma}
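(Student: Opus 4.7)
My plan is to prove the lemma by strong induction on $|E(G)|$. The base case, when $G$ is already a forest, is immediate since $G$ itself is then the desired subforest. For the inductive step, it suffices to exhibit an edge $e$ in some cycle of $G$ such that $G - e$ still satisfies \eqref{eq:subforest_condition}, after which the induction hypothesis applied to $G - e$ delivers a spanning subforest of $G$.

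The crux is therefore the following sub-claim: if $G$ satisfies \eqref{eq:subforest_condition} and contains a cycle $C$, then some edge of $C$ can be removed without violating \eqref{eq:subforest_condition}. I would prove this by contradiction. Suppose every $e=(a,b)\in C$ is essential, witnessed by $U_e\subseteq A$ with $w(U_e)>w(N_{G-e}(U_e))$. Comparing $N_G(U_e)$ with $N_{G-e}(U_e)$ forces $a\in U_e$, the vertex $b$'s only $G$-neighbor in $U_e$ is $a$, and $0\le w(N_G(U_e))-w(U_e)<w(b)$, so each $U_e$ is \emph{near-critical} with slack strictly smaller than $w(b)$.

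Next I would use the submodularity of $U\mapsto w(N_G(U))$. For two consecutive cycle-edges $e=(a,b)$ and $e'=(a',b)$ sharing the $B$-vertex $b$, the witnesses satisfy $a\in U_e\setminus U_{e'}$, $a'\in U_{e'}\setminus U_e$, and $b$ has no $G$-neighbor in $U_e\cap U_{e'}$. Combining the near-critical inequalities for $U_e$ and $U_{e'}$ with the submodular inequality $w(N_G(U_e))+w(N_G(U_{e'}))\ge w(N_G(U_e\cup U_{e'}))+w(N_G(U_e\cap U_{e'}))$ and the identity $w(U_e)+w(U_{e'})=w(U_e\cup U_{e'})+w(U_e\cap U_{e'})$ transfers the near-criticality to $U_e\cup U_{e'}$ and $U_e\cap U_{e'}$, with the benefit that $b$ has been expelled from the neighborhood of the intersection. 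Iterating this merging operation around all of $C$ should yield a single set $U^{\ast}\subseteq A$ whose neighborhood in $G$ has lost so much $B$-mass that $w(U^{\ast})>w(N_G(U^{\ast}))$, contradicting \eqref{eq:subforest_condition}.

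The main obstacle is making the cycle-traversal bookkeeping watertight: one needs to combine the $2k$ near-tight inequalities via submodularity so that the strict slacks accumulate into an honest violation of \eqref{eq:subforest_condition}, rather than being absorbed. A minor side case is a cycle vertex $b$ with $w(b)=0$: here the near-critical bound forces $w(N_G(U_e))=w(U_e)$ and no change in weighted neighborhood upon removing the incident edge, so such an edge can be removed trivially and this degeneracy does not obstruct the argument.
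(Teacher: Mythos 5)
Your overall strategy is sound and genuinely different from the paper's: you induct on $\abs{E}$ and try to show that some edge of any cycle can be deleted while preserving \eqref{eq:subforest_condition}, whereas the paper inducts on $\abs{V}$ in the Halmos--Vaughan style, splitting the graph at a tight set or manufacturing one by a weight perturbation. Your sub-claim is true, and your local analysis of a non-removable edge $e=\cbrac{a,b}$ is correct: a witness $U_e$ must contain $a$, must have $a$ as the unique $G$-neighbor of $b$ in $U_e$, and must satisfy $0\le s(U_e)<w(b)$ where $s(U)=w(N_G(U))-w(U)$. The degenerate case $w(b)=0$ is also handled correctly.

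The gap is the accumulation step, and it is a genuine mathematical obstacle rather than bookkeeping. Write the cycle as $a_1b_1a_2b_2\cdots a_kb_ka_1$. Each $b_i$ carries exactly two witnesses $X_i$ (for $\cbrac{a_i,b_i}$) and $Y_i$ (for $\cbrac{b_i,a_{i+1}}$), and the only uncrossing that produces a guaranteed gain at $b_i$ is the one pairing $X_i$ with $Y_i$: since $N_G(b_i)\cap X_i=\cbrac{a_i}$ and $N_G(b_i)\cap Y_i=\cbrac{a_{i+1}}$, the vertex $b_i$ leaves the neighborhood of $X_i\cap Y_i$, giving $s(X_i\cup Y_i)+s(X_i\cap Y_i)\le s(X_i)+s(Y_i)-w(b_i)<w(b_i)$. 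After this first round, $X_i\cap Y_i$ has already expelled $b_i$ (so it cannot yield a further gain there), while $X_i\cup Y_i$ contains both $a_i$ and $a_{i+1}$, so any further intersection involving it retains $b_i$ in its neighborhood. The total available strict slack is only bounded by $2\sum_i w(b_i)$, and the total extractable gain is $\sum_i w(b_i)$ (one gain of $w(b_i)$ per $b_i$), so the combined slack is bounded by $\sum_i w(b_i)$, not by $0$; no violation of \eqref{eq:subforest_condition} follows. You would need either a second gain at each $b_i$ or extra structure on the witnesses (e.g.\ extremal choices with lattice properties) that you have not supplied. It is worth noting that the paper does prove exactly your sub-claim, but only by invoking Strassen's theorem: it takes a coupling supported on $E$, cancels mass alternately around the cycle to zero out an edge, and then applies the \emph{converse} direction of Strassen's theorem to conclude that the reduced graph still satisfies the condition. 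Doing this cancellation purely on the dual side, as you propose, is essentially as hard as the lemma itself, so as it stands the proposal defers the entire difficulty to an unproven step.
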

We will refer to \eqref{eq:subforest_condition} as the \emph{subforest condition}. 
Note that both the marriage condition \eqref{eq:marriage_condition} and the coupling condition \eqref{eq:coupling_condition} are special cases of this subforest condition. For the marriage condition all vertices have unit weight, while for the coupling condition the weight function is normalized so that $w(A)=w(B)=1$. Note that these three conditions seem to break the symmetry between sets $A$ and $B$ that is present in the setting of the theorems. This is in fact not the case, as it can be easily verified that \eqref{eq:subforest_condition} implies that $w(U) \leq w(N_G(U))$ for all $U \se B$. 

Here we give an independent proof of \cref{thm:subforest} directly from first principles.
The proof uses the same strategy used in the inductive proof of the marriage theorem by \citet{halmos1950marriage}, 
in which the induction hypothesis acts as a marriage broker. 
That is, we distinguish between the case where the graph contains a `critical set' of vertices and the case where no such set exists.
\begin{proof}[Proof of \cref{thm:subforest}]
We will apply induction on $\abs{V}$. 
Let $\mathcal{S}=\cbrac{U \se A \c 0<\abs{U}<\abs{A}} \cup \cbrac{U \se B \c 0<\abs{U}<\abs{B}}$
denote the collection of non-empty strict subsets of either $A$ or $B$.
We will distinguish two cases.

In the first case we assume that there exists a $U \in \mathcal{S}$ with $w(U)=w(N_G(U))$. 
Without loss of generality we assume that $U \se A$.
Let $\cbrac{V_1, V_2}$ be the partition of $V$ given by $V_1=U\cup N_G(U)$ and $V_2=(A\setminus U) \cup (B \setminus N_G(U))$.
Then both induced subgraphs $G[V_1]$ and $G[V_2]$ satisfy the subforest condition \eqref{eq:subforest_condition}.
Thus by the induction hypothesis there exist subforests $F_1$ and $F_2$
of $G[V_1]$ and $G[V_2]$, respectively, that both 
satisfy the subforest condition \eqref{eq:subforest_condition}. 
The graph $F=(V,E(F_1)\cup E(F_2))$, that contains all edges of $F_1$ and $F_2$, is a subforest of $G$ that satisfies the subforest condition with respect to $w$.
Also note that $F$ contains at least two connected components, since none of the vertices in 
$V_1$ is connected to any of the vertices in $V_2$.

For the second case we assume that $w(U)<w(N_G(U))$ for all $U \in \mathcal{S}$.
Let $\varepsilon$ denote the minimal weight of any vertex of $G$, 
i.e. $\varepsilon = \min_{v \in V}w(v)$. Let $x \in V$ be any vertex with $w(x)=\varepsilon$.
Without loss of generality we can assume that $x \in A$.
Let $y \in N_G(x)$ be any neighbor of $x$. 
Let $\mathcal{U}=\cbrac{U \se A \c x \notin U \text{ and }U \cap N_G(y) \neq \varnothing}$
and take 
\begin{equation*}
 \delta = \min_{U \in \mathcal{U}} w(N_G(U))-w(U).
\end{equation*} 
Since $A-x \in \mathcal{U}$, we have that $\delta \leq \varepsilon$.
Let $D \in \mathcal{U}$ be such that $w(N_G(D))-w(D)=\delta$.
We add a new element $\tilde{x}$ to $A$ to obtain $\tilde{A}=A+\tilde{x}$. 
Let $\tilde{V}=V+\tilde{x}$, $\tilde{E}=E(G)+\cbrac{\tilde{x},y}$ and $\tilde{G}=(\tilde{V},\tilde{E})$.
Define the weight function $\tilde{w}$ on $V+\tilde{x}$ 
by $\tilde{w}=w+\delta\Ind_{\cbrac{\tilde{x}}}-\delta\Ind_{\cbrac{x}}$.
The weighted graph $(\tilde{G},\tilde{w})$ now satisfies the subforest condition.
It also holds that $w(D+\tilde{x})=w(N_{\tilde{G}}(D+\tilde{x}))$.

If $D \neq A - x$, then $\abs{(D+\tilde{x})\cup N_{\tilde{G}}(D+\tilde{x})} < \abs{V}$.
It follows from the induction hypothesis, in the same manner as in the previous case, that there exists 
a subforest $\tilde{F}$ of $\tilde{G}$ with $x$ and $y$ in two distinct components
such that $(\tilde{F}, \tilde{w})$ satisfies \eqref{eq:subforest_condition}.
The graph $F=(V,E(\tilde{F})-\cbrac{\tilde{x},y}+\cbrac{x,y})$ is a spanning subgraph of $G$.
Since $x$ and $y$ are contained in distinct components of $\tilde{T}$, we also have that $F$ is a forest.
It is also clear that $(F,w)$ satisfies the subforest condition.

If instead we have that $D = A - x$, then $N_{G}(D+\tilde{x})=B$. 
This follows since we have for all $v \in V$ that $w(x) \leq w(v)$ and $w(v)<w(N_G(v))$,
so there does not exists a $v \in B$ with $N_{G}(v)\se \cbrac{x}$.
This means that $\varepsilon = \delta$. Define the weight function $w'$ on $V-x$ 
by $w'=w-\delta\Ind_{\cbrac{y}}$. Then the weighted graph $(G[V-x],w')$ satisfies the subforest condition.
Hence, by the induction hypothesis, there exists a spanning subforest $F'$ of $G[V-x]$ satisfying the subforest condition.
Let $F=(V,E(F')+\cbrac{x,y})$. Then $F$ is a subforest of $G$ satisfying the subforest condition.

In both cases we have shown the existence of a spanning subforest that satisfies the subforest condition, thus completing the proof.
\end{proof}

\begin{remark}\label{rem:transport}
The problem of finding a coupling that satisfies the coupling condition \eqref{eq:coupling_condition} can also be phrased as an optimal transport problem. A solution to such a transportation problem corresponds to a bipartite graph with weights assigned to the edges. \citet{klee1968facets} showed that the polytope of feasible solutions has at its vertices exactly those solutions whose accompanying bipartite graph corresponds to a forest. Hence, the subforest lemma can also be derived from this result of \citeauthor{klee1968facets}. 
\end{remark}

\subsection{Deriving Strassen's theorem from the subforest lemma}
For the independent proof of Strassen's theorem for finite sets, we show how it can easily be derived from the subforest lemma.
It is natural to translate the setting of \cref{thm:strassen_finite} to a weighted bipartite graph $G=(V,E,w)$ defined by
\begin{equation}\label{eq:coupling_graph}
\begin{gathered}
 V=A \cup B,\quad E=\cbrac{\cbrac{x,y} \c (x,y) \in R}, \text{ and } \\
 w(x)=\begin{cases}\P(x) & \text{ if }x \in A \\ \P'(x) & \text{ if }x \in B\end{cases}
\end{gathered}
\end{equation}
(Here we assume w.l.o.g. that $A\cap B=\varnothing$.) The coupling condition then translates to 
$w(U) \leq w(N_G(U))$ for all $U \se A$, while the sought coupling becomes an edge weight function $\hat{w}:E \to [0,\infty)$ that satisfies
$w(x)=\sum_{e \sim x}\hat{w}(e)$ for all $x \in A$, where the sum is taken over all edges incident to $x$.
This translation gives us the following equivalent formulation of \cref{thm:strassen_finite},
which resembles a weighted version of Hall's marriage theorem.

\begin{proposition}[Combinatorial formulation of Strassen's theorem ]\label{thm:strassen_finite_combinatorial}
 Let $G=(V,E,w)$ be a weighted bipartite graph with bipartition $\cbrac{A,B}$ such that $w(A)=w(B)$.
 Then the following are equivalent:
 \begin{enumerate}[$(i)$]
  \item for all $U \se A$ it holds that $w(U) \leq w(N(U))$; \label{it:strassen_condition}
  
  \item there exists an edge weight function $\hat{w}:E \to [0,\infty)$ such that for all $x \in V$ it holds that $w(x)=\sum_{e \sim x}\hat{w}(e)$, where the sum is taken over all edges incident to $x$. \label{it:strassen_edge_weight}
 \end{enumerate}
\end{proposition}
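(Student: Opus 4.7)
The direction $(ii) \Rightarrow (i)$ should fall out of a one-line double-counting: for $U \se A$, summing the identity $w(x) = \sum_{e \sim x} \hat{w}(e)$ over $x \in U$ counts each edge with an endpoint in $U$ exactly once, and because $G$ is bipartite every such edge has its other endpoint in $N(U) \se B$; the resulting sum is therefore bounded above by $\sum_{y \in N(U)} \sum_{e \sim y} \hat{w}(e) = w(N(U))$.

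For the converse $(i) \Rightarrow (ii)$, my plan is to first apply the subforest lemma (\cref{thm:subforest}) to replace $G$ by a spanning subforest $F$ still satisfying the subforest condition, and to extend any $\hat{w}$ constructed on $E(F)$ by zero on the remaining edges of $G$. To build $\hat{w}$ on $E(F)$ I would induct on $|V|$, peeling off one leaf of $F$ at a time. In the base case where $F$ has no edges, the subforest condition applied to singletons (together with its symmetric version noted right after \cref{thm:subforest}) forces $w \equiv 0$, so $\hat{w} \equiv 0$ works.

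For the inductive step, I would pick a leaf $x$ of $F$ with unique neighbour $y$, say with $x \in A$ and $y \in B$. The constraint at $x$ forces $\hat{w}(\{x,y\}) := w(x)$, and $w(x) \leq w(y)$ follows from the subforest condition at $\{x\}$. I then pass to the weighted forest $(F - x, w')$, where $w'(y) := w(y) - w(x) \geq 0$ and $w'$ agrees with $w$ elsewhere. Assuming this smaller instance satisfies the hypotheses of the proposition, the induction hypothesis supplies edge weights on $F - x$, and combining them with $\hat{w}(\{x,y\}) = w(x)$ yields the identity at $y$ via $w'(y) + w(x) = w(y)$.

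The main obstacle, and really the only content of the argument, is verifying that $(F - x, w')$ still satisfies the subforest condition. For $U \se A \setminus \{x\}$ one checks $N_{F - x}(U) = N_F(U)$, since $x \in A$ is never a neighbour of $U$ and deleting $x$ does not affect whether $y$ is adjacent to some vertex of $U$. If $y \notin N_F(U)$, the required inequality is identical to the one for $(F, w)$. If $y \in N_F(U)$, then $N_F(U \cup \{x\}) = N_F(U)$, and applying the subforest condition for $(F, w)$ to $U \cup \{x\}$ yields $w(U) + w(x) \leq w(N_F(U))$, which rearranges precisely to $w'(U) \leq w'(N_{F - x}(U))$. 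The global balance $w'(A \setminus \{x\}) = w'(B)$ is inherited from $w(A) = w(B)$. The trick of probing with $U \cup \{x\}$ rather than $U$ is what makes the induction close.
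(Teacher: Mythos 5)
Your proof is correct and follows essentially the same route as the paper: the identical double-counting for $(ii)\Rightarrow(i)$, and for $(i)\Rightarrow(ii)$ the same reduction to a spanning subforest via \cref{thm:subforest} followed by a leaf-peeling induction in which the leaf's weight is assigned to its unique incident edge, subtracted from its neighbour, and the remaining edges of $G$ are given weight zero. The only difference is that you make explicit the verification that $(F-x,w')$ still satisfies the subforest condition (by probing with $U\cup\cbrac{x}$) and the edgeless base case, both of which the paper leaves implicit.
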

\begin{proof}[Proof of \cref{thm:strassen_finite_combinatorial} using \cref{thm:subforest}]
 The implication from \eqref{it:strassen_edge_weight} to \eqref{it:strassen_condition} is easily shown. If $\hat{w}$ satisfies \eqref{it:strassen_edge_weight}, then
 \begin{equation*}
  w(U) = \sum_{x \in U}\sum_{e \sim x}\hat{w}(e) \leq \sum_{y \in N(U)}\sum_{e \sim y}\hat{w}(e) = w(N(U)).
 \end{equation*}

 The reverse implication will be proven by induction on $\abs{V}$.
 Since $w$ satisfies \eqref{it:strassen_condition}, by the subforest lemma there exists a subforest $F$ of $G$ satisfying \eqref{it:strassen_condition}.
 Since $F$ is a forest, there exists a vertex $x$ in $F$ with degree $1$. Without loss of generality we can assume that $x \in A$.
 Let $y \in B$ be the unique neighbor of $x$ in $F$. 
 
 Note that it follows from \eqref{it:strassen_condition} that $w(x)\leq w(y)$. Set $\varepsilon = w(x)$.
 Consider the induced subgraph $F[V-x]$ obtained by removing vertex $x$ from $F$ and equip it with the vertex weight function 
 $\tilde{w}:V-x\to[0,\infty)$ given by $\tilde{w}(v)=w(v)-\varepsilon\Ind_{\cbrac{v=y}}$. 
 The weighted graph $(F[V-x],\tilde{w})$ satisfies \eqref{it:strassen_condition}, 
 hence by the induction hypothesis there exists an edge weight function $\hat{w}$ on $F[V-x]$ satisfying \eqref{it:strassen_edge_weight}. 
 Now define an edge weight function on the edges of $G$ by 
 \begin{equation*}
  \check{w}(e) = 
  \begin{cases}
   \hat{w}(e) & \text{if } e \in E(F[V-x]) \\
   \varepsilon & \text{if } e = \cbrac{x,y} \\
   0 & \text{otherwise.}
  \end{cases}
 \end{equation*}
 Then $\check{w}$ is the sought edge weight function satisfying \eqref{it:strassen_edge_weight}.
 \end{proof}
\Cref{thm:strassen_finite} follows directly from \cref{thm:strassen_finite_combinatorial} by normalizing the vertex and edge weights, so that these form probability measures. 

This independent proof of Strassen's theorem for finite sets is constructive and can in principle be used to find the required coupling.
However, far more efficient methods for finding such a coupling exists.
In \cite[corollary 2.1.5]{lovasz1986matching} it is mentioned that \cref{thm:strassen_finite_combinatorial} can be derived from the max-flow min-cut theorem. The method is similar to the derivation of the marriage theorem from the max-flow min cut theorem, that is given in \cite{ford1958representatives}. This derivation is not only elegant, it also shows that any method for finding maximal network flows can also be used to find such a coupling.

\section{Equivalence of Hall's theorem and Strassen's theorem}
In the second part of this paper we prove the equivalence of Strassen's theorem for finite sets and Hall's marriage theorem.
\subsection{Deriving Hall's theorem from Strassen's theorem}
The derivation of Hall's theorem from Strassen's theorem will go via the subforest lemma.
\begin{proof}[Proof of \cref{thm:subforest} using Strassen's theorem]
 The statement will be proven by induction on $\abs{E}$. 
 If $\abs{E}=1$, then $G$ is itself a forest, so there is nothing to prove.
 Now assume that $\abs{E} \geq 2$ and that the statement holds when $\abs{E}$ is smaller.
 
 Define the probability measures $\P$ and $\P'$ on $A$ and $B$, respectively, by setting $\P(x)=\frac{w(x)}{w(A)}$ and $\P(y)=\frac{w(y)}{w(B)}$ for $x \in A$ and $y$ in $B$. 
 Since $G$ satisfies the subforest condition, 
 these two probability measures then satisfy the coupling condition with respect to the relation $E$. Hence, by Strassen's theorem there exists a coupling $\hat{\P}$ of $\P$ and $\P'$ that is supported on $E$.
 
 If $G$ is not a forest, then there is a subset of edges $C \se E$ that constitute a cycle.
 Now take $\varepsilon = \min\cbrac{\hat{\P}(e) \c e \in C}$ and let $e^* \in C$ be such that $\hat{\P}(e)=\varepsilon$.
 Since $G$ is a bipartite graph, the cycle $C$ contains an even number of edges. Hence, we can partition $C$ into two sets $\cbrac{I_C,J_C}$ such that edges in $I_C$ are only incident to edges in $J_C$ and vice-versa. Without loss of generality we can assume that $e^* \in I_C$.
 
 Now define a new probability measure $\tilde{\P}$ on $E$ by 
 \begin{equation*}
  \tilde{\P}(e) =
  \begin{cases}
   \hat{\P}(e)-\varepsilon & \text{ if }e \in I_C \\
   \hat{\P}(e)+\varepsilon & \text{ if }e \in J_C \\
   \hat{\P}(e) & \text{ otherwise.}
  \end{cases}
 \end{equation*}
Since each vertex in $G$ is incident to the same number of edges in $I_C$ as to edges in $J_C$, we have that $\tilde{\P}$ is also a coupling of $\P$ and $\P'$. Moreover, the coupling $\tilde{\P}$ is supported on $E\setminus \cbrac{e^*}$, since by construction it holds that 
$\tilde{\P}(e^*)=0$. Thus, by applying Strassen's theorem in the reverse direction, 
we find that the relation $E \setminus \cbrac{e^*}$ satisfies the coupling condition with respect to $\P$ and $\P'$.
It follows that the weighted graph $G-e^*$ satisfies the subforest condition. 
By the induction hypothesis $G-e^*$ contains a subforest $F$ satisfying that condition. 
Clearly, $F$ is also a subforest of $G$, which finishes the proof. 
\end{proof}

\begin{proof}[Proof of \cref{thm:marriage_theorem} using the subforest lemma]
 We will only prove the sufficiency of the marriage condition, which will be done by induction on $\abs{E}$.
So, we assume that $G$ satisfies the marriage condition.

 Clearly the statement holds if $\abs{E}=1$. Now assume that $\abs{E} \geq 2$ and that the statement holds if $\abs{E}$ is smaller.
 Note that the marriage condition is a special case of the subforest condition where each vertex has unit weight. Hence, by \cref{thm:subforest} there exists a subforest $F$ of $G$ that satisfies the marriage condition.
 Since $F$ is a forest, there exists a vertex $x$ in $F$ with degree $1$. Let $y$ be the unique neighbor of $x$ in $F$.
 Then the induced subgraph $G[V\setminus\cbrac{x,y}]$ still satisfies the marriage condition.
 Thus by the induction hypothesis $G[V\setminus\cbrac{x,y}]$ has perfect matching $M$. 
 Taking $M \cup \cbrac{x,y}$ gives a perfect matching of $G$.
\end{proof}

\subsection{Deriving Strassen's theorem from the marriage theorem}
To finish our reciprocal derivations we still have to prove Strassen's theorem from the marriage theorem.
This will be done using two well-known generalizations of both theorems, \cref{thm:hall_deficiency,thm:strassen_deficiency} below, that allow for some small deficiencies in the conditions.

The used generalization of Hall's marriage theorem is due to \citet{ore1955matching} and can be found in e.g. \cite[Thm.~1.3.1]{lovasz1986matching}. It can be easily derived from the marriage theorem itself, which led \citeauthor{mirsky1969selfrefining} to call the marriage theorem a \emph{self-refining result} \cite{mirsky1969selfrefining}.
For completeness we also give this derivation.
\begin{proposition}[Hall's theorem with deficiency]\label{thm:hall_deficiency}
Let $G$ be a bipartite graph with bipartition $\cbrac{A,B}$ with $\abs{A}=\abs{B}=n$.
Then $G$ contains a matching $M$ with $\abs{M}\geq n-k$ if and only if it holds that
\begin{equation}\label{eq:marriage_condition_deficiency}
 \abs{U} \leq \abs{N_G(U)}+k, \quad \text{ for all }U \se A.
\end{equation}
\end{proposition}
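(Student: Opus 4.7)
The plan is to reduce this deficiency form of Hall's theorem to \cref{thm:marriage_theorem} itself by padding $G$ with dummy vertices on both sides. The necessity direction is immediate: if $M$ is a matching of $G$ with $|M|\geq n-k$, then at most $k$ vertices of any $U\subseteq A$ can fail to be saturated by $M$, while each saturated vertex of $U$ is paired with a distinct vertex of $N_G(U)$; hence $|N_G(U)|\geq|U|-k$.

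For sufficiency, assume the deficiency condition \eqref{eq:marriage_condition_deficiency}. I would build an enlarged bipartite graph $G'$ with bipartition $\{A',B'\}$, where $A'=A\cup\{a_1,\ldots,a_k\}$ and $B'=B\cup\{b_1,\ldots,b_k\}$, so that $|A'|=|B'|=n+k$. On top of the edges of $G$, declare each dummy $a_i$ adjacent to every vertex of $B'$ and each dummy $b_j$ adjacent to every vertex of $A'$, so the dummies are universally adjacent across the bipartition.

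The key step is to verify the marriage condition \eqref{eq:marriage_condition} for $G'$. For any $U\subseteq A'$ that contains some dummy $a_i$ we have $N_{G'}(U)\supseteq N_{G'}(a_i)=B'$, and the condition is automatic. For $U\subseteq A$, one computes $N_{G'}(U)=N_G(U)\cup\{b_1,\ldots,b_k\}$ as a disjoint union, so $|N_{G'}(U)|=|N_G(U)|+k$; the inequality in $G'$ then reduces to exactly \eqref{eq:marriage_condition_deficiency}. Applying \cref{thm:marriage_theorem} to $G'$ yields a perfect matching $M'$.

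Finally, extract a matching of $G$ by setting $M=M'\cap E(G)$. If $\alpha$ denotes the number of vertices of $A$ matched by $M'$ to some $b_j$ and $\beta$ the number of dummies $a_i$ matched by $M'$ into $B$, a two-way count of the $k$ edges of $M'$ incident to $\{b_1,\ldots,b_k\}$ gives $\alpha=\beta$, and hence $|M|=n-\alpha\geq n-k$. I do not expect any deep obstacle; the only real subtlety is recognizing that one must pad \emph{both} sides symmetrically, since adding only $k$ dummies to $B$ would violate the $|A|=|B|$ hypothesis of \cref{thm:marriage_theorem}, and then tracking which edges of $M'$ touch dummies in order to see how much of the matching survives when restricted to $G$.
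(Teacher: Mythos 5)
Your proposal is correct and follows essentially the same route as the paper: pad both sides with $k$ universally adjacent dummy vertices, check the marriage condition for the enlarged graph, apply \cref{thm:marriage_theorem}, and restrict the resulting perfect matching to $G$. Your accounting of how many matched edges survive the restriction is slightly more detailed than the paper's (which just observes that at most $2k$ edges of the perfect matching touch the $2k$ dummies), and you also supply the necessity direction that the paper omits, but these are cosmetic differences.
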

\begin{proof}[Proof of \cref{thm:hall_deficiency} using Hall's marriage theorem]
 We only prove the sufficiency of \eqref{eq:marriage_condition_deficiency}.
 
 Construct the bipartite graph $\tilde{G}$ by adding $k$ new vertices $a_1,\ldots a_k$ to $A$ and $k$ new vertices $b_1,\ldots b_k$ to $B$.
 Set $\tilde{A}=A\cup\cbrac{a_1,\ldots a_k}$ and $\tilde{B}=B\cup\cbrac{b_1,\ldots b_k}$. Also add edges between all $a_i$ and all vertices in $\tilde{B}$ and all $b_i$ and all vertices in $\tilde{A}$. That is, $\tilde{G}=(\tilde{A}\cup \tilde{B}, \tilde{E})$ with 
 $\tilde{E}=E \cup \cbrac{\cbrac{a_i,v} \c 1 \leq i \leq k, \ v \in \tilde{B}} 
 \cup \cbrac{\cbrac{b_i,v} \c 1 \leq i \leq k, \ v \in \tilde{A}}$.
 
 Since $G$ satisfies \eqref{eq:marriage_condition_deficiency} and all vertices have $k$ more neighbors in $\tilde{G}$ than in $G$, we have that $\tilde{G}$ satisfies the marriage condition \eqref{eq:marriage_condition}. Hence, by \cref{thm:marriage_theorem} $\tilde{G}$ contains a perfect matching $\tilde{M}$. Note that $\abs{\tilde{M}}=n+k$. Hence, $M:=\tilde{M}\cap E$ is a matching of $G$ with $\abs{M}\geq n-k$, since at most $2k$ edges in $\tilde{M}$ are incident to any of the $2k$ vertices that are not in $G$.
\end{proof}

As with \cref{thm:hall_deficiency} the generalized version of Strassen's theorem also follows from its original.
However, for our purposes we will derive it from \cref{thm:hall_deficiency} instead.
\begin{proposition}[Strassen's theorem with deficiency]\label{thm:strassen_deficiency}
Let $A$ and $B$ be finite sets and $R\se A \times B$ a relation between them.
 Let $\P$ and $\P'$ be probability measures on $A$ and $B$, respectively. 
 Let $\varepsilon \geq 0$ be given.
 Then there exists a coupling $\hat{\P}$ of $\P$ and $\P'$ with
 $\hat{\P}(R)\geq 1-\varepsilon$ if and only if
 \begin{equation}\label{eq:coupling_condition_perturbation}
  \P(U) \leq \P'(N_R(U)) + \varepsilon, \quad \text{ for all }U \se A.
 \end{equation}
\end{proposition}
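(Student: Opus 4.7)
The necessity of \eqref{eq:coupling_condition_perturbation} follows from a one-line estimate. Given any coupling $\hat{\P}$ with $\hat{\P}(R) \geq 1 - \varepsilon$, I split $\P(U) = \hat{\P}(U \times B)$ into the portion inside $R$ and the portion outside. The first lies inside $A \times N_R(U)$ and is therefore at most $\P'(N_R(U))$; the second is at most $1 - \hat{\P}(R) \leq \varepsilon$.

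For sufficiency, the plan is to first handle the case where the values of $\P$, $\P'$ and the number $\varepsilon$ share a common denominator $n$, and then to pass to a limit. In the rational case, with $\varepsilon = k/n$ for some integer $k\geq 0$, I would blow up the setting by replacing each $x \in A$ by $n\P(x)$ distinct copies and each $y \in B$ by $n\P'(y)$ copies, obtaining sets $\tilde{A}$, $\tilde{B}$ of equal size $n$. Copies of $x$ and $y$ are declared adjacent in a bipartite graph $\tilde{G}$ exactly when $(x,y) \in R$. For any $\tilde{U} \se \tilde{A}$, writing $U \se A$ for the set of originals that have at least one copy in $\tilde{U}$, uniformity between copies gives $\abs{\tilde{U}} \leq n\P(U)$ and $\abs{N_{\tilde{G}}(\tilde{U})} = n\P'(N_R(U))$, so \eqref{eq:coupling_condition_perturbation} implies the deficiency marriage condition $\abs{\tilde{U}} \leq \abs{N_{\tilde{G}}(\tilde{U})} + k$. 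Applying \cref{thm:hall_deficiency} yields a matching $M$ in $\tilde{G}$ of size at least $n-k$.

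The matching $M$ must then be turned into an actual coupling. Setting $\mu(x,y)$ equal to the number of edges of $M$ joining a copy of $x$ to a copy of $y$, divided by $n$, produces a non-negative measure on $A \times B$ supported on $R$, with total mass $\abs{M}/n \geq 1-\varepsilon$, and with marginals $\mu_A, \mu_B$ dominated by $\P, \P'$. The residual marginals $\P - \mu_A$ and $\P' - \mu_B$ are non-negative measures sharing the same total mass $1 - \abs{M}/n$; any coupling $\nu$ of them (for instance, their normalised product) completes $\hat{\P} := \mu + \nu$ to a coupling of $\P$ and $\P'$, for which $\hat{\P}(R) \geq \mu(R) \geq 1 - \varepsilon$.

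For general real-valued $\P, \P', \varepsilon$, I would approximate by rational measures $\P_j, \P'_j$ with common denominator tending to infinity, and inflate the tolerance to a value $\varepsilon_j \to \varepsilon$ (rounded up to that denominator) so that $(\P_j, \P'_j, \varepsilon_j)$ still satisfies \eqref{eq:coupling_condition_perturbation}. The rational case produces couplings $\hat{\P}_j$ with $\hat{\P}_j(R) \geq 1 - \varepsilon_j$, and since the space of couplings lies in a compact simplex of $\mathbb{R}^{A \times B}$, a subsequence converges to a coupling of $\P$ and $\P'$ with $\hat{\P}(R) \geq 1 - \varepsilon$. The main technical hurdle I anticipate is the blow-up step, particularly the care needed to convert the deficient matching into a genuine coupling with the correct marginals; the approximation to the real case is then a routine compactness argument.
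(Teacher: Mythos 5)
Your proof is correct and follows essentially the same route as the paper: the same one-line necessity estimate, the same blow-up into $n\P(x)$ copies of each point combined with \cref{thm:hall_deficiency}, and the same compactness argument for passing from rational to arbitrary data. The only cosmetic difference is that you complete the deficient matching to a genuine coupling by coupling the residual marginals $\P-\mu_A$ and $\P'-\mu_B$ directly, whereas the paper adds $k$ arbitrary edges between the unmatched copies to form a perfect matching of the complete bipartite graph; these are the same idea.
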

\begin{proof}[Proof of \cref{thm:strassen_deficiency} using \cref{thm:hall_deficiency}]
For the necessity of \eqref{eq:coupling_condition_perturbation} we note that if $\hat{\P}$ is a coupling of $\P$ and $\P'$ with 
$\hat{\P}(R)\geq 1- \varepsilon$, then it holds for all $U\se A$ that
\begin{equation*}
\P(U)=\hat{\P}(U\times B) \leq \hat{\P}(U\times N_R(U))+\varepsilon \leq \hat{\P}(A\times N_R(U))+\varepsilon = \P'(N_R(U))+\varepsilon.
\end{equation*}
It remains to prove its sufficiency. 
This will be done in two steps. In the first step we assume that $\P$ and $\P'$ are both rational valued and that $\varepsilon\in\Q$, and
in the second step we derive the result for arbitrary $\P$, $\P'$ and $\varepsilon$.
\subsubsection*{(1)}
First we assume that $\P$ and $\P'$ are rational valued and we also take $\varepsilon$ rational.
 Define $G=(V,E,w)$ as in \eqref{eq:coupling_graph}. Then we have that $w(x)\in \Q$ for all $x \in V$. 
 Since $V$ is finite, there exists a large enough $N \in \N$ such that the product $Nw(x)$ is an integer for all $x \in V$ and such that $k:=\varepsilon N$ is an integer as well.
 
Let $\tilde{V}=\bigcup_{x \in A}\bigcup_{i =1}^{Nw(x)}\cbrac{x_i}$ be the set consisting of $Nw(x)$ copies of each element $x \in V$.
Now consider the bipartite graph $\tilde{G}=(\tilde{V}, \tilde{E})$, where the edge set is given by
$\tilde{E}=\cbrac{\cbrac{x_i, y_j} \c \cbrac{x, y} \in E}$.
That is, two vertices $x_i$ and $y_j$ in $\tilde{G}$ are connected by an edge if and only if their originals $x$ and $y$ are adjacent in $G$. Denote the bipartition of $\tilde{G}$ by $\cbrac{\tilde{A}, \tilde{B}}$.
 
 Since $\P$ and $\P'$ satisfy \eqref{eq:coupling_condition_perturbation}, we then have for all $\tilde{U} \se \tilde{A}$ that 
 \begin{equation*}
 \abs{\tilde{U}} \leq \abs{N_{\tilde{G}}(\tilde{U})}+k.
 \end{equation*}
By \cref{thm:hall_deficiency} there exists a matching $\tilde{M}$ of $\tilde{G}$ with $\abs{\tilde{M}}=N-k$.
Let $a_1,\ldots,a_k$ and $b_1,\ldots,b_k$ denote the $k$ vertices in $\tilde{A}$ and $\tilde{B}$, respectively, that are unmatched by $\tilde{M}$. Now consider the set of edges $M^+:=\tilde{M} \cup \cbrac{\cbrac{a_i,b_i}\c i \in [k]}$, 
which is obtained from $\tilde{M}$ by adding $k$ arbitrary edges, not necessarily belonging to $\tilde{E}$, between the unmatched vertices.
 
For each pair $(x,y)\in A\times B$ let 
\begin{equation*}
\hat{w}(x,y)=\abs*{\bigcup_{i=1}^{Nw(x)}\bigcup_{j=1}^{Nw(y)}\cbrac{x_i,y_j} \cap M^+} 
\end{equation*}
denote the number of edges between copies of $x$ and copies of $y$ that occur in $M^+$.
Since $M^+$ is a perfect matching of the complete bipartite graph on $\tilde{A}\cup \tilde{B}$, we find that $\sum_{y \in B}\hat{w}(x,y)=Nw(x)$ for all $x \in A$ and similarly that $\sum_{x \in A}\hat{w}(x,y)=Nw(y)$ for all $y \in B$.
So, the probability measure $\hat{\P}$ on $A \times B$ defined by $\hat{\P}(x,y)=\frac{\hat{w}(x,y)}{N}$ is a coupling of $\P$ and $\P'$. 
Since only $k$ of the edges of $M^+$ do not belong to $\tilde{M}$ we also find that $\hat{\P}(R)=1-\varepsilon$, so $\hat{\P}$ is the sought coupling.

\subsubsection*{(2)} 
 Let $\P$, $\P'$ and $\varepsilon$ be arbitrary.
 Let $(\varepsilon_i)_{i \in \N}$ be a rational sequence converging to $\varepsilon$ from above.
 Since $\P$ and $\P'$ satisfy \eqref{eq:coupling_condition_perturbation}, we can find two sequences $(\P_i)_{i \in \N}$ and $(\P'_i)_{i \in \N}$ of rational valued probability measures on $A$ and $B$ that converge such that for every $i \in \N$ it holds that 
 \begin{equation*}
   \P_i(U) \leq \P_i'(N_R(U)) + \varepsilon_i, \quad \text{ for all }U \se A.
 \end{equation*}
 
 By the first part of the proof, for each $i$ there exists a coupling $\hat{\P}_i$ of $\P_i$ and $\P'_i$ with 
 $\hat{\P}_i(R) \geq 1-\varepsilon_i$. 
 Note that we can interpret $(\hat{\P}_i)_{i \in \N}$ as a sequence in the compact metric space $[0,1]^{\abs{E}}$.
 Thus it contains a converging subsequence $(\hat{\P}_{i_j})_{j \in \N}$ with limit $\hat{\P}$.
 It follows that $\hat{\P}(R)\geq 1-\varepsilon$.
 
 It remains to be shown that $\hat{\P}$ is a coupling of $\P$ and $\P'$.
 Let $\delta > 0$ be given. Then for all $x \in A$ there exists a $k \in \N$ such that for all $j \geq k$ it holds that both
 $\abs{\P_{i_j}(x)-\P(x)} < \delta$ and 
 \begin{equation*}
 \abs{\hat{\P}_{i_j}(\cbrac{x}\times B)-\hat{\P}(\cbrac{x}\times B)} < \delta. 
 \end{equation*}
It follows that
\begin{align*}
 \abs{\P(x)-\hat{\P}(\cbrac{x}\times B)} & < \abs{\P(x)-\hat{\P}_{i_k}(\cbrac{x}\times B)}+ \delta \\
 & =  \abs{\P(x)-\P_{i_k}(x)}+ \delta \\
 & <  2\delta.
\end{align*}
Similarly, we find that $ \abs{\P'(x)-\hat{\P}(A\times \cbrac{x})}<2\delta$ for all $x \in B$.
As this holds for all $\delta > 0$, it follows that $\hat{\P}$ is a coupling of $\P$ and $\P'$.
\end{proof}

 \section*{Acknowledgments}
This paper originated as follow-up on my bachelor's thesis. 
I would like to thank my supervisors Luca Avena and Siamak Taati for 
their support, Siamak for introducing me to this topic and the many lengthy discussions, and Luca for guiding me through the subsequent process leading to this paper.
I also thank Frits Spieksma and Leen Stougie for their useful comments.

\printbibliography

@article{birkhoff1946algebra,
  title="Tres observaciones sobre el algebra lineal",
  author="Birkhoff, Garrett",
  journal="Univ. Nac. Tucum{\'a}n Rev. Ser. A",
  volume="5",
  pages="147--151",
  year="1946"
}

@article{dilworth1950decomposition,
 author = "R.P. Dilworth",
 journal = "Annals of Mathematics",
 number = "1",
 pages = "161-166",
 publisher = "Annals of Mathematics",
 title = "A Decomposition Theorem for Partially Ordered Sets",
 volume = "51",
 year = "1950"
}

@inbook{feldman1996doublystochastic,
  author = "David Feldman",
  title = "Doubly Stochastic Measures: Three Vignettes",
  booktitle = "Distributions with Fixed Marginals and Related Topics",
  pages = "84--96",
  series = "Lecture notes-monograph series",
  number = "28",
  publisher = "Institute of Mathematical Statistics",
  year = "1996"
}

@article{ford1956flow,
  author = "Ford, Lester R. and Fulkerson, Delbert R.",
  title = "Maximal flow through a network",
  journal = "Canad. J. Math.",
  volume = "8",
  number = "3",
  pages = "399--404",
  year = "1956"
}

@article{ford1958representatives,
  author = "Ford, L. R and Fulkerson, D. R",
  title = "Network Flow and Systems of Representatives",
  journal = "Canad. J. Math.",
  pages = "78--84",
  volume = "10",
  publisher = "Cambridge University Press",
  year = "1958"
}

@article{hall1935representatives,
  title="On representatives of subsets",
  author="Hall, Philip",
  journal="J. Lond. Math. Soc.",
  volume="10",
  number="1",
  pages="26--30",
  year="1935"
}

@article{halmos1950marriage,
 author = "Paul R. Halmos and Herbert E. Vaughan",
 journal = "Am. J. Math.",
 number = "1",
 pages = "214-215",
 publisher = "Johns Hopkins University Press",
 title = "The Marriage Problem",
 volume = "72",
 year = "1950"
}

@inbook{klee1968facets,
  author = "V. Klee and C. Witzgall",
  title = "Facets and vertices of transporation polyhedra",
  booktitle = "Mathematics of the decision sciences, Part 1",
  pages = "257--282",
  editors = "G.B. Dantzig and A.F. Veinott",
  publusher = "American Mathematics Society",
  year = "1968"
}

@article{konig1916graphen,
 author = "K{\"o}nig, D{\'e}nes",
 title = "{\"U}ber Graphen und ihre Anwendung auf Determinantentheorie und Mengenlehre",
 journal = "Math. Ann.",
 year = "1916",
 volume = "77",
 number = "4",
 pages = "453--465"
}

@article{konig1931graphen,
  title="Graphen und matrizen",
  author="K{\"o}nig, D{\'e}nes",
  journal="Mat. Fiz. Lapok",
  volume="38",
  pages="116--119",
  year="1931"
}

@article{lindvall1999strassen,
  title="On {S}trassen's theorem on stochastic domination",
  author="Lindvall, Torgny",
  journal="Electron. Commun. Prob.",
  volume="4",
  pages="51--59",
  year="1999"
}

@book{lovasz1986matching,
  title = "Matching Theory",
  author = "Lov{\'a}sz, L{\'a}zl{\'o} and Plummer, Michael D.",
  series = "North-Holland Mathematics Studies",
  year = "1986",
  publisher = "Elsevier Science"
}

@article{menger1927kurventheorie,
  title="Zur allgemeinen kurventheorie",
  author="Menger, Karl",
  journal="Fund. Math.",
  volume="10",
  number="1",
  pages="96--115",
  year="1927",
  publisher="Institute of Mathematics Polish Academy of Sciences"
}

@article{mirsky1969selfrefining,
  author = "L. Mirsky",
  title = "Hall's criterion as a `self-refining' result",
  journal = "Monatshefte f{\"u}r Mathematik",
  pages = "139--146",
  volume = "73",
  publisher = "Springer",
  number = "2",
  year = "1969"
}

@article{ore1955matching,
  author = "Oystein Ore",
  title = "Graphs and matching theorems",
  volume = "22",
  journal = "Duke Math. J.",
  number = "4",
  publisher = "Duke University Press",
  pages = "625 -- 639",
  year = "1955"
}

@book{reichmeider1984equivalence,
  title="The equivalence of some combinatorial matching theorems",
  author="Reichmeider, Philip F.",
  year="1984",
  publisher="Polygonal Pub. House"
}

@article{strassen1965existence,
  title="The existence of probability measures with given marginals",
  author="Strassen, Volker",
  journal="Ann. Math. Stat.",
  volume="36",
  pages="423--439",
  year="1965",
  publisher="JSTOR"
}

\end{document}